\documentclass{article}
\usepackage{amssymb,amsmath,amsthm,graphicx}
\usepackage[all,color]{xy}

\textheight 8.5in
\textwidth 6.5 in
\oddsidemargin 0in
\topmargin 0in

\def\qed{\hfill {\hbox{${\vcenter{\vbox{               
   \hrule height 0.4pt\hbox{\vrule width 0.4pt height 6pt
   \kern5pt\vrule width 0.4pt}\hrule height 0.4pt}}}$}}}

\def\utr{\, \underline{\triangleright}\, }
\def\otr{\, \overline{\triangleright}\, }

\newtheorem{theorem}{Theorem}

\newtheorem{proposition}[theorem]{Proposition}
\newtheorem{corollary}[theorem]{Corollary}

\theoremstyle{definition}
\newtheorem{example}{Example}
\newtheorem{definition}{Definition}

\date{}

\title{\Large \textbf{Biquandle Module Quiver Representations}}

\author{Yewon Joung\footnote{Email: yewonjoung@hanyang.ac.kr. Supported by Basic Science Research Program through the National Research Foundation
                          of Korea(NRF) funded by the Ministry of Education(NRF-2022R1I1A1A01070329).}
\and Sam Nelson\footnote{Email: Sam.Nelson@cmc.edu. Partially supported 
by Simons Foundation collaboration grant 702597.}}

\begin{document}
\maketitle

\begin{abstract}
We introduce an infinite family of quiver representation-valued invariants
of classical, virtual and surface-knots and links associated to a choice
of finite biquandle, commutative unital ring, biquandle module and set of 
biquandle endomorphisms.
As an application, we use this quiver to define a new infinite family of 
two-variable polynomial invariants.
\end{abstract}

\parbox{5.5in} {\textsc{Keywords:} Biquandle module quivers, 
Bikei module quivers, counting invariants, surface-links, marked graph diagrams,
Quiver representations

\smallskip

\textsc{2020 MSC:} 57K12, }

\section{\textbf{Introduction}}\label{I}

Introduced in \cite{FRS} and studied in subsequent works such as \cite{KR}, 
\textit{biquandles} are algebraic structures whose axioms encode the 
Reidemeister moves of classical knot theory. Every oriented classical, virtual 
or surface- knot or link $K$ has a \textit{fundamental biquandle} 
$\mathcal{B}(K)$ whose isomorphism class determines $X$ up to 
reversed-orientation mirror image in the classical case
\cite{EN}. Given a finite biquandle $X$, the set of biquandle homomorphisms 
$\mathrm{Hom}(\mathcal{B}(K),X)$ can be represented concretely by fixing a
presentation associated to a
diagram $D$ of $K$ analogously to fixing bases to represent linear 
transformations as matrices; each biquandle homomorphism 
$f:\mathcal{B}(K)\to X$ is represented by a \textit{biquandle coloring} of
our diagram $D$. Changing a biquandle-colored diagram by Reidemeister moves 
gives us a unique new biquandle-colored diagram representing the same 
biquandle homomorphism, analogously to applying a change-of-basis matrix.

\textit{Biquandle modules} with coefficients in a commutative unital ring $k$
generalize the Alexander module construction to the case of biquandle-colored 
oriented knots and links. More precisely, the Alexander module of a classical 
knot or link is a particular biquandle module with single-element 
coloring biquandle $X=\{1\}$ and coefficient ring $\mathbb{Z}[t^{\pm 1}]$.
Fixing a finite biquandle $X$ and biquandle module $M$, each element of the
biquandle homset determines an $k$-module which invariant under Reidemeister
moves; the multiset of these modules over the homset is the \textit{biquandle
module enhancement} of the counting invariant, previously studied in \cite{BN,CN} etc.

A subset of the set of endomorphisms $\mathrm{Hom}(X,X)$ of a biquandle 
determines a quiver structure on the homset $\mathrm{Hom}(\mathcal{B}(K),X)$.
A choice of biquandle module then gives us a weighting of the vertices in the
quiver, categorifying the biquandle module enhancement from \cite{JN1}. 
In this paper we extend this construction into a full quiver representation 
by defining module homomorphisms associated to the arrows in the quiver. We 
define new polynomial invariants of classical and virtual knots and links as 
well as surface-links from this quiver.

The paper is organized as follows. In Section \ref{BBM} we review the basics
of biquandles and biquandle modules. In Section \ref{BMQR} we recall biquandle 
coloring quivers and biquandle module quivers, introducing our new quiver 
representation and its associated polynomial knot invariant. In Section 
\ref{EC} we collect some examples and computations of the new invariants for
classical and virtual knots and links as well as oriented surface-links. We
conclude in Section \ref{Q} with some questions for future research.

This paper, including all text, diagrams, figures and computational code has 
been produced exclusively by the authors and entirely without the use of any 
form of generative AI.

\section{\textbf{Biquandles, Biquandle Modules and Quivers}}\label{BBM}

We begin with a definition; see \cite{EN} and the references therein for more.

\begin{definition}
A \textit{biquandle} is a set $X$ with two binary operations 
$\utr,\otr:X\times X\to X$ satisfying the following axioms:
\begin{itemize}
\item[(i)] For every $x\in X$ we have $x\utr x=x\otr x$,
\item[(ii)] For all $y\in X$ the maps $\alpha_y,\beta_y:X\to X$ defined by 
$\alpha_y(x)=x\otr y$ and $\beta_y(x)=x\utr y$ and the map 
$S:X\times X \to X\times X$ defined by $S(x,y)=(y\otr x,x\utr y)$
are invertible, and
\item[(iii)] For all $x,y,z\in X$ we have the \textit{exchange laws}
\[\begin{array}{rcl}
(x\utr y)\utr(z\utr y) & = & (x\utr z)\utr(y\otr z) \\
(x\utr y)\otr(z\utr y) & = & (x\otr z)\utr(y\otr z) \\
(x\otr y)\otr(z\otr y) & = & (x\otr z)\otr(y\utr z). 
\end{array}
\]
\end{itemize}
A map $\sigma:X\to Y$ between biquandles is a \textit{biquandle 
homomorphism} if for all $x,y\in X$ we have 
\[\begin{array}{rcl}
\sigma(x\utr y) & = & \sigma(x)\utr \sigma(y) \\
\sigma(x\otr y) & = & \sigma(x)\otr \sigma(y) 
\end{array}.\]
A self-homomorphism is an \textit{endomorphism}. 
\end{definition}

\begin{example}
Any set $X$ with choice of bijection $\tau:X\to X$
is a biquandle with operations $x\utr y= \tau(x) =x\otr y$
known as a \textit{constant action biquandle}.
\end{example}

\begin{example}
A group $G$ is a biquandle under the operations
\[x\utr y=y^{-1}xy^{-1}\quad x\otr y = y^{-1}.\]
\end{example}

\begin{example}
A module over $\mathbb{Z}[t^{\pm 1},s^{\pm 1}]$ is a biquandle
(called an \textit{Alexander biquandle}) under the operations
\[x\utr y=tx+(s-t) y\quad x\otr y= sx.\]
\end{example}

\begin{example}
We can specify a biquandle structure on a finite set $X=\{1,\dots,n\}$
by listing the operation tables for $\utr$ and $\otr$. For example, the
smallest nontrivial biquandle has two elements and can be specified by
\[
\begin{array}{r|rr} \utr & 1 & 2 \\ \hline 1 & 2 & 2 \\ 2 & 1& 1\end{array} 
\quad
\begin{array}{r|rr} \otr & 1 & 2 \\ \hline 1 & 2 & 2 \\ 2 & 1& 1\end{array}
\]
or as $\mathbb{Z}_2$ with $x\utr y=x\otr y=x+1$ where we write the class
of zero as $2$.
\end{example}

\begin{definition}
Let $L$ be an oriented classical or virtual knot or link or surface-link
represented by an oriented classical or virtual knot or link diagram or
oriented marked graph diagram $D$. Let $E$ be a set of generators in 
one-to-one correspondence with semiarcs in $D$. The \textit{fundamental 
biquandle} of $L$, denoted $\mathcal{B}(L)$, has presentation with generators 
given by $E$ and relations at the classical crossings given by
\[
\raisebox{-0.65in}{\scalebox{0.85}{\includegraphics{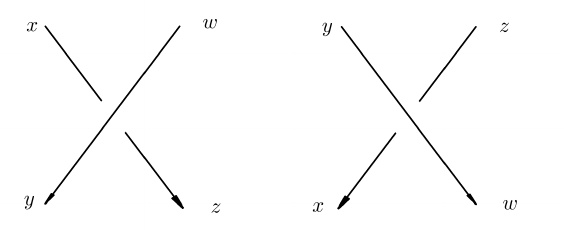}}}\quad
\begin{array}{rcl}
w & = & y\otr x \\
z & = & x\utr y
\end{array}\]
and all four semiarcs meeting at a marked vertex are equivalent.
The elements of the fundamental biquandle are equivalence classes of
biquandle words in these generators (and expressions like 
$S^{-1}_1(x,y)$, $x\utr^{-1}y$, etc. required by axiom (ii)) 
modulo the equivalence relation generated by the biquandle axioms and the 
crossing relations.
\end{definition}

We have the following standard result:
\begin{theorem}
The isomorphism class of the fundamental biquandle is an invariant of
oriented classical knots, virtual knots and surface-links.
\end{theorem}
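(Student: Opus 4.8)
The plan is to show that a Reidemeister move (or marked-graph-diagram move) performed on a diagram $D$ yields a new diagram $D'$ whose fundamental biquandle presentation is Tietze-equivalent to that of $D$, so that the two presented biquandles are isomorphic. Since any two diagrams of the same oriented link (classical, virtual, or surface-link) are related by a finite sequence of such moves, invariance follows. First I would fix, once and for all, the convention that semiarcs of $D$ are in bijection with generators of the presentation, with crossing relations $w = y\otr x$, $z = x\utr y$ as in the stated definition, and that the four semiarcs at a marked vertex are identified. The proof then proceeds move-by-move.

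For each local move, I would isolate the tangle in which the move occurs, list the generators and relations contributed by $D$ inside that tangle, do the same for $D'$, and exhibit a Tietze transformation between the two local presentations that fixes the generators on the boundary of the tangle. Concretely: for Reidemeister I, a kink introduces one new semiarc and one crossing relation of the form $x \utr x$ or $x \otr x$; axiom (i) together with the invertibility in axiom (ii) lets us eliminate the new generator. For Reidemeister II, two new semiarcs and two crossing relations appear; the invertibility of $\alpha_y,\beta_y$ (equivalently the existence of $\utr^{-1}$ and $\otr^{-1}$) and of the sideways map $S$ are exactly what is needed to solve for and then delete the new generators. For Reidemeister III, both sides have the same number of semiarcs, and matching up the labels of the outgoing semiarcs forces precisely the three exchange-law identities of axiom (iii); these are built into the biquandle axioms, so the two presentations coincide after renaming. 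For the virtual moves one observes that virtual crossings carry no relations, so semiarcs simply pass through unchanged and the detour move is immediate. For surface-links presented by marked graph diagrams, I would additionally check the Yoshikawa moves governing the marked vertices, using the "all four semiarcs at a vertex are equal" relation to see that sliding a marked vertex past a classical crossing, or performing the saddle/birth/death moves, again produces a Tietze-equivalent presentation.

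The main obstacle I anticipate is the surface-link case: there are more local moves to check (the full set of Yoshikawa moves, including those that involve a marked vertex interacting with classical crossings and with other marked vertices), and the bookkeeping of which semiarcs are identified across a marked vertex is more delicate than in the classical case. The classical Reidemeister moves are essentially routine once the right invertibility axiom is invoked for each new generator, and the virtual detour move is trivial, so I would present those briefly and devote the bulk of the argument to carefully enumerating the marked-vertex moves and the Tietze transformations that handle them. Since this is a "standard result" cited to \cite{EN}, I would in fact keep the write-up to a sketch, citing \cite{EN} and the references therein for the detailed verification, and merely indicating in each case which axiom supplies the needed invertibility or identity.
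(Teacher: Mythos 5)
Your proposal is correct and follows exactly the strategy of the paper's own (sketch) proof: show that each Reidemeister, virtual, or Yoshikawa move induces a Tietze transformation of the presented biquandle, with the biquandle axioms supplying precisely the identities and invertibility needed move-by-move. Your version simply fills in more of the details that the paper leaves to the reader.
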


\begin{proof}
(Sketch) The reader is invited to verify that the biquandle axioms are chosen 
so that Reidemeister moves and Yoshikawa moves on diagrams induce Tietze 
moves on presentations.
\end{proof}

\begin{definition}
Let $X$ be a finite biquandle and $L$ an oriented classical or virtual knot 
or link or surface-link represented by a choice of oriented classical or 
virtual knot or link diagram or oriented marked-graph diagram $D$. A 
\textit{biquandle coloring} or \textit{$X$-coloring} of $D$ is an 
assignment of an element of $X$ to each semiarc in $D$ satisfying
the \textit{coloring condition}.
\[\scalebox{0.85}{
\includegraphics{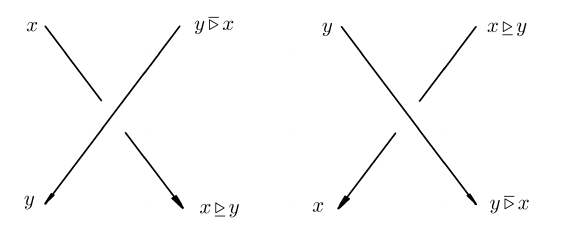}\includegraphics{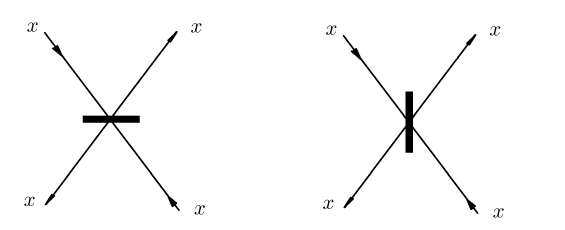}}\]
\end{definition}

A biquandle coloring defines a homomorphism $f:\mathcal{B}(L)\to X$.
The set of these homomorphisms, $\mathrm{Hom}(\mathcal{B}(L),X)$, is called
the \textit{biquandle homset}. The homset can be represented visually
as the set of $X$-colorings of any choice of diagram of $L$.

\begin{example}\label{ex:hs}
The figure 8 knot $4_1$ has three colorings by the biquandle $X$ with operation
tables
\[
\begin{array}{r|rrr}
\utr & 1 & 2 & 3  \\ \hline
1 & 2 & 3 & 1 \\
2 & 3 & 1 & 2 \\
3 & 1 & 2 & 3
\end{array}
\quad
\begin{array}{r|rrr}
\otr & 1 & 2 & 3  \\ \hline
1 & 2 & 2 & 2 \\
2 & 1 & 1 & 1 \\
3 & 3 & 3 & 3
\end{array}
\]
as shown:
\[\includegraphics{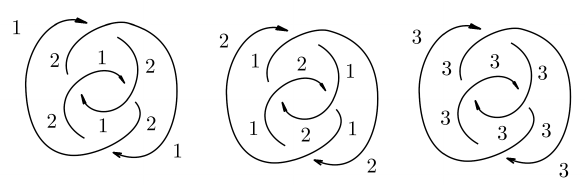}\]
\end{example}

\begin{definition}
Let $X$ be a finite biquandle and $L$ an oriented classical or virtual knot 
or link or surface-link represented by a choice of oriented classical or 
virtual knot or link diagram or oriented marked-graph diagram $D$. Let 
$k$ be a commutative unital ring. A \textit{biquandle module} structure
consists of three maps $t,s,r:X\times X\to k$ such that
\begin{itemize}
\item For all $x\in X$, $t_{x,x}+s_{x,x}=r_{x,x}$,
\item For all $x,y$, the elements $t_{x,y}$ and $r_{x,y}$ are units in $k$ and
\item For all $x,y,z\in X$, we have
\[\begin{array}{rcl}
r_{y\otr x,z\otr x} r_{x,z} & = & r_{x\utr y, z\otr y}r_{y,z}  \\
r_{x\utr z,y\utr z} t_{y,z} & = & t_{y\otr x, z\otr x}r_{x,y}  \\
r_{x\utr z,y\utr z} s_{y,z} & = & s_{y\otr x, z\otr x}r_{x,z}  \\
t_{x\utr z,y\utr z} t_{x,z} & = & t_{x\utr y, z\otr y}t_{x,y}  \\
s_{x\utr z,y\utr z} t_{y,z} & = & t_{x\utr y, z\otr y}s_{x,y}  \\
t_{x\utr z,y\utr z} s_{x,z} +s_{x\utr z, y\utr z}s_{y,z} & = & s_{x\utr y, z\otr y}r_{y,z} 
\end{array}.\]
\end{itemize}
\end{definition}

\begin{example}
The biquandle with operation tables
\[
\begin{array}{r|rrr}
\utr & 1 & 2 & 3 \\ \hline
1 & 2 & 3 & 1 \\
2 & 3 & 1 & 2 \\
3 & 1 & 2 & 3
\end{array}
\quad
\begin{array}{r|rrr}
\otr & 1 & 2 & 3 \\ \hline
1 & 2 & 2 & 2 \\
2 & 1 & 1 & 1 \\
3 & 3 & 3 & 3
\end{array}
\]
has biquandle module structures with $k=\mathbb{Z}_3$
including
\[
\begin{array}{r|rrr}
t & 1 & 2 & 3 \\ \hline
1 & 2 & 1 & 1 \\
2 & 2 & 2 & 1 \\
3 & 1 & 2 & 1 
\end{array}\quad
\begin{array}{r|rrr}
s & 1 & 2 & 3 \\ \hline
1 & 2 & 2 & 1 \\
2 & 1 & 2 & 2 \\
3 & 1 & 1 & 1 
\end{array}\quad
\begin{array}{r|rrr}
r & 1 & 2 & 3 \\ \hline
1 & 1 & 1 & 2 \\
2 & 1 & 1 & 2 \\
3 & 1 & 1 & 2
\end{array}.
\]
\end{example}

To each element $v$ of $\mathrm{Hom}(\mathcal{B}(L),X)$, a biquandle module 
associates an invariant $k$-module $M_v$ whose elements can be visualized as 
\textit{bead colorings} of an $X$-colored diagram representing the homset 
element.
\[\scalebox{0.85}{\includegraphics{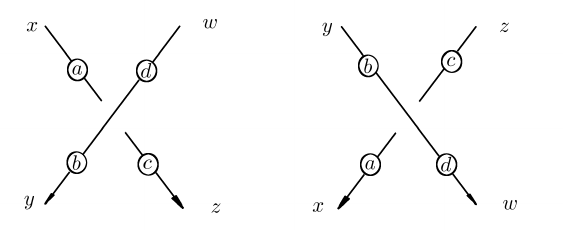} \includegraphics{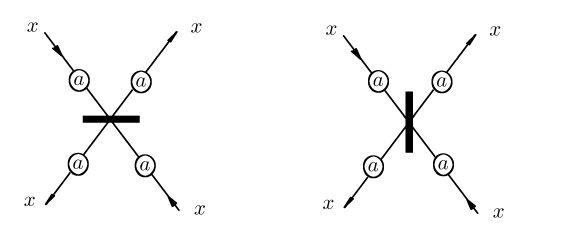}}
\]
where we have
\[c=t_{x,y}a+s_{x.y} b \quad d=r_{x,y} b.\]

Each homset element has a $k$-module of bead colorings which is invariant
up to isomorphism under Reidemeister moves in the classical case,
virtual Reidemeister moves in the virtual case, and Yoshikawa moves in the
oriented surface-link case.

\begin{example}
The homset element from Example \ref{ex:hs}
\[\includegraphics{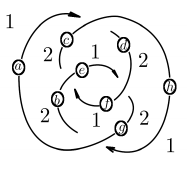}\] 
has bead-coloring matrix
\[
\left[\begin{array}{rrrrrrrr}
2 & 0 & 2 & 2 & 0 & 0 & 0 & 0  \\
0 & 0 & 1 & 0 & 0 & 0 & 0 & 2  \\
0 & 0 & 2 & 0 & 2 & 2 & 0 & 0  \\
0 & 2 & 0 & 0 & 1 & 0 & 0 & 0  \\
0 & 0 & 0 & 0 & 2 & 2 & 2 & 0  \\
0 & 0 & 0 & 2 & 0 & 1 & 0 & 0  \\
0 & 2 & 0 & 0 & 0 & 0 & 2 & 2  \\
2 & 0 & 0 & 0 & 0 & 0 & 1 & 0  \\
\end{array}\right]
\sim
\left[\begin{array}{rrrrrrrr}
1 & 0 & 0 & 0 & 0 & 0 & 0 & 2  \\
0 & 1 & 0 & 0 & 0 & 0 & 0 & 2  \\
0 & 0 & 1 & 0 & 0 & 0 & 0 & 2  \\
0 & 0 & 0 & 1 & 0 & 0 & 0 & 2  \\
0 & 0 & 0 & 0 & 1 & 0 & 0 & 2  \\
0 & 0 & 0 & 0 & 0 & 1 & 0 & 2  \\
0 & 0 & 0 & 0 & 0 & 0 & 1 & 2  \\
0 & 0 & 0 & 0 & 0 & 0 & 0 & 0  \\
\end{array}\right]
\]
and hence bead-coloring module $(\mathbb{Z}_3)^1$.
\end{example}


\begin{definition}
Let $D$ be an $X$-colored diagram and $f:X\to X$ a biquandle endomorphism. 
Then applying $f$ to each of the colors in $D$ results in another (not 
necessarily distinct) element of the homset; hence, as observed in \cite{ChN},
any subset $S$ of the set of biquandle endomorphisms determines an invariant 
quiver structure on the homset, known as the \textit{biquandle coloring quiver}
of the link $L$ represented by $D$ with respect to $(X,S)$. If 
$S=\mathrm{End}(X)$ then $BCQ(L,X)$ is the \textit{full quiver}.
\end{definition}

\section{\textbf{Biquandle Module Quivers and Representations}}\label{BMQR}

In \cite{IN}, the biquandle module enhancement was enhanced with 
the quiver structure, providing a categorification of the biquandle 
and bikei module invariant as quivers are categories. Often the next
step in categorification is to go from quivers to \textit{quiver representations}
i.e., quivers with modules at the vertices and linear transformations along 
the arrows. We will now introduce a quiver representation-valued invariant
of oriented classical and virtual knots and links and oriented surface-links.

Recall that the \textit{image subbiquandle} $\mathrm{Im}(D_v)$ of an 
$X$-colored diagram
$D_v$ is the closure of the set of elements of $X$ appearing a semiarc
labels in $D$; equivalently, it is the image of the coloring considered
as a biquandle homomorphism from $v:\mathcal(B)(L) \to X$. Then the
key observation is that if the biquandle module coefficients don't 
change when we apply the endomorphism $\sigma$, the bead coloring equations
don't change and the bead coloring spaces are naturally isomorphic. We can 
then define a quiver representation by assigning the identity map to arrows
satisfying this condition and assigning the zero map otherwise. More 
formally, we have:

\begin{definition}
Let $X$ be a finite biquandle and $L$ an oriented classical knot or link 
(respectively, virtual knot or link or surface-link) represented by a choice 
of classical knot or link diagram (respectively, virtual knot or link diagram 
or marked-graph diagram) $D$. Let $S\subset \mathrm{End}(X)$ be a subset of
the set of biquandle endomorphisms of $X$, $M$ a $X$-module with coefficients 
in a commutative unital ring $k$. Then the \textit{biquandle module quiver 
representation} of $L$ with respect to the \textit{data vector} $(X,M,k,S)$
is obtained from the biquandle coloring quiver of $L$ with respect to $(X,S)$
by weighting each vertex with the corresponding module of bead-colorings
and each arrow with the linear transformation $\phi_{\sigma}$ where
\[\phi_{\sigma}=\left\{
\begin{array}{ll}
\mathrm{Id} & t_{xy}=t_{\sigma(x)\sigma(y)}, s_{xy}=s_{\sigma(x)\sigma(y)}\ 
\mathrm{and}\ r_{xy}=r_{\sigma(x)\sigma(y)} \ \forall x,y\in \mathrm{Im}(D_v) \\
0 & \mathrm{otherwise}
\end{array}.
\right.\]
\end{definition}

\begin{proposition}
The biquandle module quiver representation is an invariant of oriented 
classical links, oriented virtual links and orientable surface-links.
\end{proposition}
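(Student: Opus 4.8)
The plan is to show that the biquandle module quiver representation is unchanged up to isomorphism of quiver representations under each of the diagrammatic moves: Reidemeister moves in the classical case, virtual Reidemeister moves in the virtual case, and Yoshikawa moves in the surface-link case. Since a quiver representation here is the data of (a) an underlying quiver, (b) a $k$-module at each vertex, and (c) a $k$-linear map along each arrow, I would verify that all three pieces of data are preserved. The first two are already known: the underlying biquandle coloring quiver $BCQ(L,X)$ with respect to $(X,S)$ is an invariant by the result of \cite{ChN} recalled above, and the vertex weightings by bead-coloring modules $M_v$ are invariant up to $k$-module isomorphism by the biquandle module enhancement result stated earlier in the excerpt. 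So the genuinely new content is the compatibility of the arrow maps $\phi_\sigma$ with these isomorphisms.

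First I would fix a move and a diagram $D$, producing a diagram $D'$, and recall that the move induces, for each homset element $v$, a specific natural isomorphism $\Psi_v : M_v \to M_{v'}$ of bead-coloring modules — this is precisely the isomorphism realized by the known enhancement invariance proof (the bead-coloring equations on $D$ and on $D'$ have the same solution space up to the obvious relabeling of beads). The key structural point is that the image subbiquandle is a diagrammatic invariant: $\mathrm{Im}(D_v) = \mathrm{Im}(D'_{v'})$, since the set of colors appearing as semiarc labels, and hence its biquandle closure, is unchanged by any Reidemeister or Yoshikawa move (new semiarcs introduced by a move carry colors forced by the coloring rule, so they lie in the closure of the colors already present). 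Consequently the condition defining $\phi_\sigma$ — namely $t_{xy} = t_{\sigma(x)\sigma(y)}$, $s_{xy} = s_{\sigma(x)\sigma(y)}$, $r_{xy} = r_{\sigma(x)\sigma(y)}$ for all $x,y \in \mathrm{Im}(D_v)$ — depends only on $(X, M, k, S)$ and on $\mathrm{Im}(D_v)$, not on the particular diagram; so $\phi_\sigma$ is literally the same map (identity or zero) before and after the move. Then the square
\[
\begin{array}{ccc}
M_v & \xrightarrow{\ \phi_\sigma\ } & M_{\sigma \circ v} \\
\Psi_v \downarrow & & \downarrow \Psi_{\sigma \circ v} \\
M_{v'} & \xrightarrow{\ \phi_\sigma\ } & M_{(\sigma \circ v)'}
\end{array}
\]
commutes: in the identity case both $\phi_\sigma$ are $\mathrm{Id}$ and one only needs $\Psi_{\sigma\circ v} = \Psi_v$ under the identification of bead sets, which holds because $\Psi$ is defined bead-by-bead independently of the colors; in the zero case both composites are zero. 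This shows $(\Psi_v)_v$ is an isomorphism of quiver representations from the representation assigned to $D$ to that assigned to $D'$.

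I would then remark that it suffices to check a generating set of moves — the classical Reidemeister moves for classical links, these together with the virtual and mixed moves for virtual links, and the Yoshikawa moves for marked-graph diagrams of surface-links — and that in each case the two ingredients invoked (invariance of $BCQ$ and of the bead-coloring modules, plus the diagram-independence of $\mathrm{Im}(D_v)$) are already available, so the verification is uniform and essentially formal. The main obstacle, such as it is, is bookkeeping rather than mathematics: one must be careful that the natural isomorphism $\Psi_v$ supplied by the enhancement proof really is natural in $v$, i.e. that applying $\sigma$ to colors and then $\Psi$ gives the same bead-level map as applying $\Psi$ and then $\sigma$ — but since $\Psi$ acts only on the bead variables and never references the underlying $X$-colors, this naturality is immediate. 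Hence the proposition follows.
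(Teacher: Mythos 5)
Your overall strategy is essentially the paper's: the underlying coloring quiver and the bead-coloring modules at the vertices are already known to be invariant, so the new content is that the arrow maps $\phi_\sigma$ are preserved, which you obtain from the diagram-independence of $\mathrm{Im}(D_v)$ together with the commutativity of the squares formed with the move-induced isomorphisms $\Psi_v$. The one step whose justification is off is the claim that $\Psi_v$ ``acts only on the bead variables and never references the underlying $X$-colors'': in general the bead correspondence induced by a move \emph{does} depend on the colors, since beads created by the move (e.g.\ in a Reidemeister I or II move) are expressed through the coefficients $t_{x,y}$, $s_{x,y}$, $r_{x,y}$ at the affected crossings, so $\Psi_{\sigma\circ v}=\Psi_v$ is not automatic. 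It is, however, true exactly in the only case you need it: when $\phi_\sigma=\mathrm{Id}$, the defining condition says the coefficients attached to $v$ and to $\sigma\circ v$ coincide on $\mathrm{Im}(D_v)$, so the two change-of-basis maps are given by identical formulas; this is precisely the paper's observation that the move induces the same change of basis on the bead-coloring matrices, so matrices that were equal before the move remain equal afterward. With that one correction your argument is complete, and it follows the same route as the paper's proof while spelling out more explicitly the intertwining squares that the paper leaves implicit.
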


\begin{proof}
The biquandle module quiver is known (indeed, constructed) to be invariant
under Reidemeister/virtual Reidemeister/Yoshikawa moves. Then it suffices to
observe that changing the diagram by such moves induces the same change of 
basis on all of the bead-coloring matrices, and hence if two bead-coloring 
matrices were equal before a move, they are equal after the move.
\end{proof}

\begin{example}\label{ex:1}
Let $X$ be the biquandle given by the operation tables
\[
\begin{array}{r|rrr}
\utr & 1 & 2 & 3 \\ \hline
1 & 2 & 2 & 2 \\
2 & 1 & 1 & 1 \\
3 & 3 & 3 & 3
\end{array}
\quad
\begin{array}{r|rrr}
\utr & 1 & 2 & 3 \\ \hline
1 & 2 & 3 & 1 \\
2 & 3 & 1 & 2 \\
3 & 1 & 2 & 3
\end{array}.
\]
Then we observe that the tables
\[
\begin{array}{r|rrr}
t & 1 & 2 & 3 \\ \hline
1 & 1 & 1 & 1 \\
2 & 1 & 1 & 1 \\
3 & 1 & 1 & 1
\end{array}
\quad
\begin{array}{r|rrr}
s & 1 & 2 & 3 \\ \hline
1 & 1 & 1 & 2 \\
2 & 1 & 1 & 2 \\
3 & 2 & 2 & 1
\end{array}
\quad
\begin{array}{r|rrr}
r & 1 & 2 & 3 \\ \hline
1 & 2 & 1 & 1 \\
2 & 1 & 2 & 1 \\
3 & 2 & 2 & 2
\end{array}
\]
define an $X$-module over $\mathbb{Z}_3$ and that the maps
\[\begin{array}{r|rrr}
x & 1 & 2 & 3 \\ \hline
\sigma_1(x) & 3 & 3 & 3 \\
\sigma_2(x) & 1 & 2 & 3 \\
\sigma_3(x) & 2 & 1 & 3 \\
\end{array}\]
form the complete set of endomorphisms of $X$. Then the complete  biquandle 
coloring quiver of the the $4_1$ knot is
\[\includegraphics{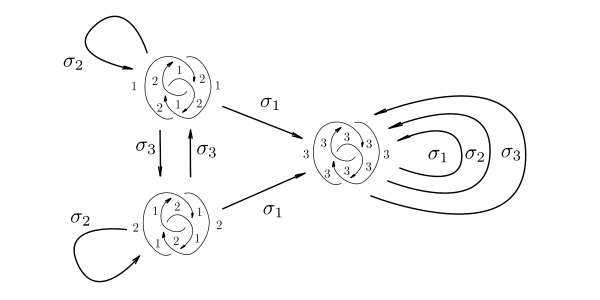}.\]
We compute that each of the three colorings has a 1-dimensional space of bead 
colorings; then the biquandle module quiver representation is
\[\includegraphics{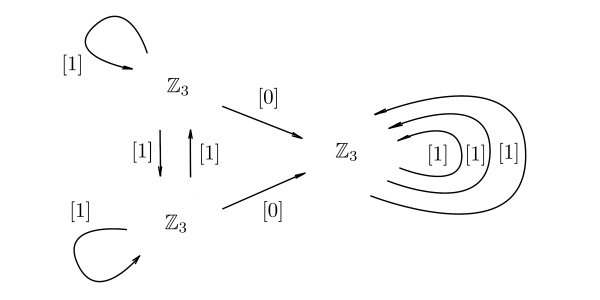}.\]
\end{example}

Comparing quiver representations directly can be computationally intensive
for large quivers, so we find it convenient to define a polynomial invariant
from the quiver representation.

\begin{definition}
Let $X$ be a finite biquandle and $L$ an oriented classical knot or link, 
virtual knot or link or oriented surface-link, $S\subset \mathrm{End}(X)$ 
a subset of the set of biquandle endomorphisms of $X$ and $M$ a $X$-module 
with coefficients in a commutative unital ring $k$. In the resulting biquandle 
module quiver, let $MP$ be the set of all maximal-length non-repeating
paths in which every edge's associated matrix is an identity matrix. We then 
define the \textit{natural path polynomial} of $L$ with respect to the data 
vector $\vec{D}=(X,M,k,S)$ to be the sum over paths $p\in MP$ of terms of the 
form $x^{\mathrm{rank}(M_v)}y^{|p|}$ where $|p|$ is the length of the path $p$, i.e.,
\[\Phi_{\vec{D}}^{MP}(L)=\sum_{p\in MP} x^{\mathrm{rank}(M_v)}y^{|p|}.\]
\end{definition}

We then have:

\begin{corollary}
The natural path polynomial $\Phi_{\vec{D}}^{MP}(L)$ is an invariant of
oriented classical and virtual links and oriented surface-links.
\end{corollary}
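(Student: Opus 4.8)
The plan is to derive the corollary directly from the preceding proposition, which already establishes that the biquandle module quiver representation itself is an invariant under Reidemeister moves (classical case), virtual Reidemeister moves (virtual case), and Yoshikawa moves (surface-link case). Since the natural path polynomial $\Phi_{\vec{D}}^{MP}(L)$ is extracted from the quiver representation by a recipe that depends only on the isomorphism type of that representation — the underlying quiver with its vertex weights $\mathrm{rank}(M_v)$ and the data of which arrows carry identity matrices versus zero matrices — it is automatically invariant as well.

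First I would recall the setup: fix a diagram $D$ of $L$ and form the biquandle module quiver representation with respect to $\vec{D}=(X,M,k,S)$. The proposition tells us that if $D'$ is obtained from $D$ by any of the relevant diagram moves, then the resulting quiver representation is isomorphic to the one built from $D$; in particular the vertex sets correspond bijectively, the bead-coloring modules at corresponding vertices are isomorphic $k$-modules (so have equal rank when $k$ is such that rank is well defined, e.g.\ a field or more generally where we take rank to mean minimal number of generators), and an arrow $\sigma$ carries the identity map in the $D$-representation if and only if the corresponding arrow carries the identity in the $D'$-representation — because the condition $t_{xy}=t_{\sigma(x)\sigma(y)}$, $s_{xy}=s_{\sigma(x)\sigma(y)}$, $r_{xy}=r_{\sigma(x)\sigma(y)}$ for all $x,y\in\mathrm{Im}(D_v)$ depends only on the homset element $v$ (equivalently, its image subbiquandle), not on the chosen diagram.

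Next I would observe that the set $MP$ of maximal-length non-repeating paths all of whose edges carry identity matrices is determined entirely by this combinatorial data: the directed graph on the vertex set with an edge kept precisely when its matrix is the identity. Since that graph is preserved (up to the natural bijection on vertices) under the moves, the collection $MP$ is preserved, each path $p$ keeps its length $|p|$, and each vertex $v$ appearing keeps its rank $\mathrm{rank}(M_v)$. Hence the sum $\sum_{p\in MP} x^{\mathrm{rank}(M_v)} y^{|p|}$ is unchanged. Finally, I would note that since every oriented classical (resp.\ virtual, surface-) link is represented by a diagram unique up to the corresponding moves, the polynomial is a well-defined invariant of $L$. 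The only point requiring a moment's care — and the closest thing to an obstacle — is confirming that ``rank'' is a move-independent quantity, i.e.\ that the relevant isomorphisms of bead-coloring modules genuinely preserve whatever notion of rank is used; this is immediate from the change-of-basis argument in the proof of the proposition, since that argument exhibits the isomorphism concretely as multiplication by an invertible matrix, so no new work is needed.
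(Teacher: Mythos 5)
Your argument is correct and matches the paper's intent: the paper presents this corollary as an immediate consequence of the preceding proposition, since the natural path polynomial is computed entirely from data (vertex module ranks and the identity-versus-zero labelling of arrows) preserved by the invariance of the quiver representation. Your additional observation that the identity/zero condition depends only on the homset element and its image subbiquandle, not on the chosen diagram, is exactly the detail that makes the paper's unstated deduction go through.
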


\begin{example}
In the biquandle module quiver representation in Example \ref{ex:1}
we have natural path polynomial $\Phi_{\vec{D}}^{MP}(4_1)=4xy^4+6xy^3$.
\end{example}

\section{\textbf{Examples and Computations}}\label{EC}

In this Section we collect some examples and computations. We stress that
these are toy examples, selected because their small size makes them easily 
computable via \texttt{python} code. We remark that the true power of this 
infinite family of invariants lies in the choice of larger and more complex
biquandles, modules over larger finite or infinite rings, and larger sets of 
endomorphisms.

\begin{example}
Let $X$ be the biquandle with operation table
\[
\begin{array}{r|rrrr}
\utr & 1 & 2 & 3 & 4 \\ \hline
1 & 2 & 2 & 1 & 2\\
2 & 1 & 1 & 2 & 1\\
3 & 3 & 3 & 4 & 4\\
4 & 4 & 4 & 3 & 3
\end{array}
\quad
\begin{array}{r|rrrr}
\otr & 1 & 2 & 3 & 4 \\ \hline
1 & 2 & 2 & 1 & 2\\
2 & 1 & 1 & 2 & 1\\
3 & 3 & 3 & 4 & 4\\
4 & 4 & 4 & 3 & 3
\end{array}
\]
Via \texttt{python} code, we compute that $X$ has biquandle modules over 
$\mathbb{Z}_3$ including
\[
\begin{array}{r|rrrr}
t & 1 & 2 & 3 & 4 \\ \hline
1 & 1 & 1 & 1 & 1 \\
2 & 1 & 1 & 1 & 1 \\
3 & 1 & 1 & 2 & 1 \\
4 & 1 & 1 & 2 & 1
\end{array}\quad
\begin{array}{r|rrrr}
s & 1 & 2 & 3 & 4 \\ \hline
1 & 0 & 0 & 0 & 0 \\
2 & 0 & 0 & 0 & 0 \\
3 & 0 & 0 & 2 & 2 \\
4 & 0 & 0 & 1 & 1
\end{array}\quad
\begin{array}{r|rrrr}
r & 1 & 2 & 3 & 4 \\ \hline
1 & 1 & 1 & 1 & 1 \\
2 & 1 & 1 & 1 & 1 \\
3 & 2 & 2 & 1 & 1 \\
4 & 2 & 2 & 2 & 2
\end{array}
\]
and endomorphisms including the map
\[
\begin{array}{r|rrrr}
x & 1 & 2 & 3 & 4 \\ \hline
\sigma(x) & 2 & 1 & 3 & 4
\end{array}.
\]
Then we compute the natural path polynomials of the prime classical links with
up to seven crossings in the table at \cite{KA} as shown
in the table.
\[
\begin{array}{r|l}
L & \Phi_{\vec{D}}^{MP}(L) \\ \hline
L2a1 & 4x^2y^2 + 8xy^2 + 4xy \\
L4a1 & 12x^2y^2 + 4xy \\
L5a1 & 12x^2y^2 + 4xy \\
L6a1 & 12x^2y^2 + 4x^2y \\
L6a2 & 4x^2y^2 + 8xy^2 + 4xy \\
L6a3 & 4x^2y^2 + 4x^2y + 8xy^2 \\
L6a4 & 56x^3y^2 + 8y \\
L6a5 & 8x^3y^2 + 24x^2y^2 + 8x^2y + 24xy^2 \\
L6n1 & 8x^3y^2 + 24x^2y^2 + 24xy^2 + 8xy \\
\end{array}
\quad
\begin{array}{r|l}
L & \Phi_{\vec{D}}^{MP}(L) \\ \hline
L7a1 & 12x^2y^2 + 4x^2y \\
L7a2 & 4x^3y^2 + 8x^2y^2 + 4xy \\
L7a3 & 4x^3y^2 + 8x^2y^2 + 4xy \\
L7a4 & 12x^2y^2 + 4xy \\
L7a5 & 4x^2y^2 + 4x^2y + 8xy^2 \\
L7a6 & 4x^2y^2 + 8xy^2 + 4xy \\
L7a7 & 8x^3y^2 + 24x^2y^2 + 24xy^2 + 8xy \\
L7n1 & 4x^3y^2 + 8x^2y^2 + 4xy \\
L7n2 & 4x^3y^2 + 8x^2y^2 + 4xy \\
\end{array}
\]
In particular, we note that this example shows that $\Phi_{\vec{D}}^{MP}(L)$
is not determined by the biquandle counting invariant since both $L2a1$ and 
$L4a1$ have counting invariant value 16 with respect to $X$.
\end{example}  

\begin{example}
Let $X$ be the biquandle with operation tables
\[
\begin{array}{r|rrrr}
\utr & 1 & 2 & 3 & 4 \\ \hline
1 & 2 & 2 & 2 & 2 \\
2 & 1 & 1 & 1 & 1 \\
3 & 3 & 3 & 4 & 4 \\
4 & 4 & 4 & 3 & 3
\end{array}
\quad 
\begin{array}{r|rrrr}
\otr & 1 & 2 & 3 & 4 \\ \hline
1 & 2 & 2 & 1 & 1 \\
2 & 1 & 1 & 2 & 2 \\
3 & 4 & 4 & 4 & 4 \\
4 & 3 & 3 & 3 & 3
\end{array}.
\]
We compute that $X$ has biquandle modules over $\mathbb{Z}_3$ including
\[
\begin{array}{r|rrrr}
t & 1 & 2 & 3 & 4 \\ \hline
1 & 1 & 1 & 2 & 2 \\
2 & 1 & 1 & 2 & 2 \\
3 & 1 & 1 & 1 & 1 \\
4 & 1 & 1 & 1 & 1 
\end{array}
\quad 
\begin{array}{r|rrrr}
s & 1 & 2 & 3 & 4 \\ \hline
1 & 1 & 1 & 2 & 1 \\
2 & 1 & 1 & 2 & 1 \\
3 & 1 & 1 & 1 & 2 \\
4 & 2 & 2 & 2 & 1
\end{array}
\quad 
\begin{array}{r|rrrr}
r & 1 & 2 & 3 & 4 \\ \hline
1 & 2 & 2 & 1 & 1 \\
2 & 2 & 2 & 1 & 1 \\
3 & 1 & 1 & 2 & 2 \\
4 & 1 & 1 & 2 & 2
\end{array}
\]
and has endomorphisms including
\[
\begin{array}{r|rrrr}
x & 1 & 2 & 3 & 4 \\ \hline
\sigma_1(x) & 1 & 2 & 4 & 3 \\
\sigma_2(x) & 2 & 1 & 4 & 3 \\
\sigma_3(x) & 2 & 1 & 3 & 4 \\
\end{array}.
\]
We then compute the table of natural path polynomial values for
prime classical links with up to seven crossings as shown.
\[
\begin{array}{r|l}
L & \Phi_{\vec{D}}^{MP}(L) \\ \hline
L2a1 & 192xy^6 \\
L4a1 & 192xy^6 + 8xy^2 \\
L5a1 & 192xy^6 + 8xy^2 \\
L6a1 & 192x^2y^6 + 8x^2y^2 \\
L6a2 & 192xy^6 \\
L6a3 & 192x^2y^6 \\
L6a4 & 384xy^6 + 16xy^2 + 32y^2 \\
L6a5 & 384x^2y^6 \\
L6n1 & 384xy^6
\end{array}\quad
\begin{array}{r|l}
L & \Phi_{\vec{D}}^{MP}(L) \\ \hline
L7a1 & 192x^2y^6 + 8x^2y^2 \\
L7a2 & 192xy^6 + 8xy^2 \\
L7a3 & 192xy^6 + 8xy^2 \\
L7a4 & 192xy^6 + 8xy^2 \\
L7a5 & 192x^2y^6 \\
L7a6 & 192xy^6 \\
L7a7 & 384xy^6 \\
L7n1 & 192xy^6 + 8xy^2 \\
L7n2 & 192xy^6 + 8xy^2
\end{array}.
\]
In particular this example shows that the natural path polynomial is not 
determined by the original biquandle module polynomial invariant since 
the links $L7a7$ and $L7n1$ both have biquandle module polynomial value
$16u$ but are distinguished by their natural path polynomials in the table.
\end{example}

\begin{example}
Biquandle module quivers and natural path polynomials are defined for oriented
virtual knots and links as well. Let $X$ be the biquandle with operation tables
\[
\begin{array}{r|rrr}
\utr & 1 & 2 & 3 \\ \hline
1 & 2 & 2 & 2 \\
2 & 1 & 1 & 1 \\
3 & 3 & 3 & 3\\
\end{array}
\quad
\begin{array}{r|rrr}
\otr & 1 & 2 & 3 \\ \hline
1 & 2 & 3 & 1 \\
2 & 3 & 1 & 2 \\
3 & 1 & 2 & 3\\
\end{array}
\]
and let $R=\mathbb{Z}_5$; then $X$ has endomorphism set 
\[
\begin{array}{r|rrr}
x & 1 & 2 & 3  \\ \hline
\sigma_1(x) & 1 & 2 & 3 \\
\sigma_2(x) & 2 & 1 & 3 \\
\sigma_3(x) & 3 & 3 & 3  \\
\end{array}
\]
and biquandle modules including
\[
\begin{array}{r|rrr}
t & 1 & 2 & 3 \\ \hline
1 & 1 & 1 & 1 \\
2 & 1 & 1 & 1 \\
3 & 4 & 4 & 4\\ 
\end{array}
\quad
\begin{array}{r|rrr}
s & 1 & 2 & 3 \\ \hline
1 & 1 & 4 & 1 \\
2 & 4 & 1 & 4 \\
3 & 4 & 1 & 4
\end{array}
\quad 
\begin{array}{r|rrr}
r & 1 & 2 & 3 \\ \hline
1 & 2 & 2 & 3 \\
2 & 3 & 2 & 2 \\
3 & 3 & 3 & 3
\end{array}
\]
over $R$. We then compute (via \texttt{python}) the values of the natural path
polynomial for each of the prime virtual knots with up to 4 classical crossings
in the table at \cite{KA}:
\[
\begin{array}{r|l}
\Phi_{\vec{D}}^{MP}(L) & L \\ \hline
2y+6y^3 & 2.1, 3.1, 3.2, 3.3, 3.4, 4.1, 4.2, 4.3, 4.4, 4.6, 4.9. 4.10, 4.11, 4.12, 4.13, 4.14, 4.15, 4.18, 4.20, 4.22, \\ 
       & 4.25, 4.26, 4.27, 4.28, 4.29, 4.30, 4.31, 4.32, 4.33, 4.34, 4.37, 4.38, 4.39, 4.40, 4.43, 4.44, 4.45, 4.46, \\
       & 4.48, 4.49, 4.50, 4.51, 4.52, 4.53, 4.53, 4.69, 4.70, 4.73, 4.74, 4.75, 4.78, 4.81, 4.82, 4.83, 4.84, 4.87, \\
       & 4.88, 4.92, 4.93, 4.94, 4.95, 4.101, 4.103, 4.104 \\
8y+6y^3 & 4.61, 4.62, 4.64 \\
2xy+6xy^3 & 3.5, 4.5, 4.7, 4.8, 4.16, 4.17, 4.19, 4.21, 4.23, 4.24, 4.35, 4.36, 4.41, 4.42, 4.47, 4.55, 4.56, 4.57, \\
          & 4.58, 4.59,  4.60, 4.63, 4.71, 4.72, 4.76, 4.77, 4.79, 4.80, 4.85, 4.86,4.89, 4.90, 4.91, 4.96, 4.97, \\ 
          & 4.100, 4.102, 4.105, 4.106, 4.107, 4.108 \\
8xy+6xy^3 & 3.6, 3.7, 4.65, 4.66, 4.66, 4.67, 4.68, 4.98 \\
6x^2y^y+8x^2y & 4.99 \\
\end{array}
\]
\end{example}

\begin{example}
Biquandle module quivers and natural path polynomials are also 
defined for oriented and unoriented surface-links. Let $X$ be the 
biquandle given by the operation tables
\[
\begin{array}{r|rrr}
\utr & 1 & 2 & 3 \\ \hline
1 & 3 & 1 & 3 \\
2 & 2 & 2 & 2 \\
3 & 1 & 3 & 1\\
\end{array}
\quad
\begin{array}{r|rrr}
\otr & 1 & 2 & 3 \\ \hline
1 & 3 & 3 & 3 \\
2 & 2 & 2 & 2 \\
3 & 1 & 1 & 1\\
\end{array}
\]
and let $R=\mathbb{Z}_3$; then $X$ has endomorphism set 
\[
\begin{array}{r|rrr}
x & 1 & 2 & 3  \\ \hline
\sigma_1(x) & 1 & 2 & 3 \\
\sigma_2(x) & 2 & 2 & 2 \\
\sigma_3(x) & 3 & 2 & 1  \\
\end{array}
\]
and biquandle modules including
\[
\begin{array}{r|rrr}
t & 1 & 2 & 3 \\ \hline
1 & 1 & 1 & 1 \\
2 & 2 & 1 & 2 \\
3 & 1 & 1 & 1\\ 
\end{array}
\quad
\begin{array}{r|rrr}
s & 1 & 2 & 3 \\ \hline
1 & 1 & 0 & 2 \\
2 & 0 & 1 & 0 \\
3 & 2 & 0 & 1
\end{array}
\quad 
\begin{array}{r|rrr}
r & 1 & 2 & 3 \\ \hline
1 & 2 & 2 & 2 \\
2 & 2 & 2 & 2 \\
3 & 2 & 2 & 2
\end{array}
\]
over $R$. We then compute (via \texttt{python}) the values of the natural path
polynomial for each of the oriented surface-links in the table at \cite{Y}:
\[
\begin{array}{r|l}
L & \Phi_{\vec{D}}^{MP}(L)  \\ \hline
2_1 &  4xy^2 \\ 
6^{0,1}_1 &  2x^2y^2 + 6xy^2 \\ 
8_1 & 4x^2y^2 \\ 
8^{1,1}_1 & 6xy^2 \\ 
9_1 &  4x^2y^2 \\ 
9^{0,1}_1 & 4x^2y^2 + 6xy^2 \\ 
10_1 & 4xy^2 \\
\end{array}
\quad
\begin{array}{r|l}
L & \Phi_{\vec{D}}^{MP}(L)  \\ \hline
10_2 & 4x^2y^2 \\ 
10_3 & 4xy^2 \\
10^1_1 & 4x^2y^2 \\ 
10^{0,1}_1 & 4x^2y^2+6xy^2 \\ 
10^{0,1}_2 & 8x^2y^2+2xy^2 \\ 
10^{1,1}_1 & 6xy^2 \\
10^{0,0,1}_1 & 6x^3y^2+14x^2y^2.
\end{array}
\]
\end{example}

\section{\textbf{Questions}}\label{Q}

We end with some questions and directions for future research. 

The main question is how to interpret these invariants -- what is the 
geometric meaning of the natural path polynomial?

What other quiver representations can be defined on the biquandle module
quiver? What other decategorifications are possible? Is it always possible to
find a biquandle module quiver representation distinguishing any two 
non-equivalent knots, virtual knots, links, or surface-links?

\bibliography{yj-sn3}{}
\bibliographystyle{abbrv}

\bigskip

\noindent
\textsc{Department of Mathematics \&Research Institute for Natural Science\\
Hanyang University\\
Seoul 04763, Republic of Korea}

\medskip

\noindent
\textsc{Department of Mathematical Sciences \\
Claremont McKenna College \\
850 Columbia Ave. \\
Claremont,  CA 91711}

\end{document}